\newtheorem{lema}{Lemma}[section]
\newtheorem{teo}[lema]{Theorem}
\newtheorem{prop}[lema]{Proposition}
\theoremstyle{definition}
\newtheorem*{defi}{Definition}
\numberwithin{equation}{section}
\newcommand{\cf}{\emph{cf.} }
\newcommand{\tq}{\,|\,}
\newcommand{\ed}{\ar@{-}}
\renewcommand{\epsilon}{\varepsilon}
\numberwithin{equation}{section}
\title{Subgroup Separability of Artin groups II}
\author{Kisnney Emiliano de Almeida\\Departamento de Ciências Exatas\\ Universidade Estadual de Feira de Santana (UEFS)\\ Av. Transnordestina S/N\\ 44036-900\\  Feira de Santana - BA\\  Brazil\\
\texttt{kisnney@gmail.com}\\ \and Igor Lima\\Departamento de Matemática\\ Universidade de Brasília (UnB)\\70910-900\\ Brasília - DF\\Brazil\\  \texttt{igor.matematico@gmail.com}}
\begin{document}
\maketitle

\abstract{In this paper, we establish a new criterion for determining whether an Artin group is subgroup separable (LERF), building upon a criterion introduced in a previous work. Specifically, we prove an Artin group is LERF if and only if it does not contain certain induced subgraphs, thus providing a more direct generalization of the Metaftsis-Raptis criterion for RAAGs. As consequences, we prove an Artin group is ERF if and only if it is a free abelian group and we establish a link between subgroup separability and coherence for Artin groups. }

\section{Introduction}

We commence by recalling that the cosets of finite index normal subgroups of a group $G$ form a basis for the profinite topology of $G$. A group $G$ is called {\bfseries Residually Finite} if this topology is Hausdorff, or equivalently, if the trivial subgroup is closed in it. That is equivalent to the intersection of all its finite index subgroups to be trivial, among other possible definitions.

There are some natural stronger versions of this property, and we will deal with two of them. We say a group $G$ is {\bfseries (Locally) Extended  Residually Finite}, or just {\bfseries (L)ERF}, if every (finitely generated) subgroup of $G$ is closed in its profinite topology. A LERF group is also called {\bfseries subgroup separable}, which is equivalent to every finitely generated subgroup of $G$ to be equal to an intersection of finite index subgroups of $G$.

Subgroup separability was originally introduced by Hall \cite{H}, possessing significant implications for both group theory and low-dimensional topology. Nevertheless, it has been affirmatively or negatively established for only a limited number of group classes. This property enables the demonstration that specific immersions can elevate to embeddings within a finite cover (\cite{S}, \cite{S2}, \cite{W}, \cite{LR}, \cite{Wi}). Moreover, for finitely presented groups, subgroup separability guarantees the solvability of the generalized word problem. This means that there exists an algorithm capable of determining, for every element $g \in G$ and any finitely generated subgroup $H \leq G$, whether $g$ is a member of $H$ or not \cite{M}. Subgroup separability is also correlated with other group properties (\cf \cite{Mi}, for example).

Artin groups represent a diverse class of groups, defined by a finite simplicial labeled graph. They encompass a wide range of groups, including free groups, free abelian groups, classical Braid groups, and various other subclasses. Many conjectures surround Artin groups, with most of them lacking definitive answers. For instance, there are ongoing conjectures establishing torsion-freeness, solvable word problem, centerlessness (for irreducible not spherical Artin groups), among other unsolved questions. Further details can be found at \cite{Ca}. 

Metaftsis and Raptis \cite[Theorem 2]{MR} established a Right-Angled Artin group is subgroup separable if and only if it does not contain certain induced subgraphs. In a previous work   \cite{AL} we have combined the Metaftis-Raptis criterion and Jankiewicz-Schreve results on the Generalized Tits Conjecture \cite{JS} to establish a general criterion for subgroup separability of all Artin groups. However, this general criterion has a different, more constructive shape. We will give more details on Artin groups and previous results on its subgroup separability in Section \ref{preliminaries}.

In this paper, we give a new criterion for subgroup separability of Artin groups that directly generalizes the Metaftsis-Raptis criterion for RAAGs. Through this paper, all graphs are finite simplicial and with edges labeled by integers greater than 1. We recall a subgraph $\Theta$ of a graph $\Gamma$ is called {\bfseries full} or {\bfseries induced}  if every edge of $\Gamma$ which connects vertices of $\Theta$ is also inside $\Theta$.

\begin{defi}\label{defpoisgr}
  We say a graph $\Gamma$ is \emph{poisonous} if it contains a full subgraph $\gamma$ that satisfies one of the following conditions:
  \begin{enumerate}
  \item
  $\gamma$ is a square of edges labeled by 2;

  \item
  $\gamma$ is a non-closed path of length three with all edges labeled by 2;

  \item
  $\gamma$ is connected with exactly three vertices and at most one edge labeled by 2;

  \item
  $\gamma$ is  isomorphic to the graph
 $$ \begin{tikzpicture}
  \node[circle,draw](a) {};
\node[circle,draw] (b) [right=of a] {};
  \node[circle,draw] (c) [below=of b] {};
  \node[circle,draw] (d) [below=of a] {};

  \draw[-] (a) -- node[label=above:$2$]{} (b);
  \draw[-] (b) -- node[label=right:$2$]{} (c);
    \draw[-] (c) -- node[label=below:$2$]{} (d);
      \draw[-] (d) -- node[label=left:$2$]{} (a);
        \draw[-] (a) -- node[label=above:$m$]{} (c);
      \end{tikzpicture},$$

    with $m>2$.

  \item
  $\gamma$ is  isomorphic to the graph

   $$ \begin{tikzpicture}
  \node[circle,draw](a) {};
\node[circle,draw] (b) [right=of a] {};
  \node[circle,draw] (c) [below=of b] {};
  \node[circle,draw] (d) [below=of a] {};

  \draw[-] (a) -- node[label=above:$2$]{} (b);
  \draw[-] (b) -- node[label=right:$2$]{} (c);
    \draw[-] (c) -- node[label=below:$2$]{} (d);
      \draw[-] (d) -- node[label=left:$2$]{} (a);
        \draw[-] (a) -- node[label=above:$m$,pos=0.4]{} (c);
        \draw[-] (b) -- node[label=below:$n$,pos=0.6]{} (d);
      \end{tikzpicture},$$

      with $m>2$ and $n>2$.
    \end{enumerate}

    For clarity, we will say a graph is \emph{$i$-poisonous} if it contains a full subgraph $\gamma$ that satisfies the $i$-th condition above.
\end{defi}

Note that if $\Gamma_1$ is a poisonous full subgraph of $\Gamma_2$ then $\Gamma_2$ is poisonous. We present our main result below.

\begin{teo}\label{teopois}
  Let $A=A(\Gamma)$ be an Artin group. Then $A$ is subgroup separable if and only if $\Gamma$ is not poisonous.
\end{teo}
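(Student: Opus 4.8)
The plan is to derive the theorem from the constructive criterion of \cite{AL} together with the Metaftsis--Raptis theorem for RAAGs \cite{MR}, reducing everything to a purely combinatorial statement about graphs. Recall that in \cite{AL}, using the Generalized Tits Conjecture of Jankiewicz--Schreve \cite{JS}, one attaches to $\Gamma$ a right-angled Artin group $A(\Gamma')$ sitting inside $A(\Gamma)$, in such a way that subgroup separability of $A(\Gamma)$ is governed by that of $A(\Gamma')$; by \cite{MR} the RAAG $A(\Gamma')$ is LERF exactly when $\Gamma'$ contains neither an induced square $C_4$ nor an induced path $P_4$ on four vertices. Thus the whole theorem reduces to the graph-theoretic equivalence
\[ \Gamma \text{ is poisonous} \iff \Gamma' \text{ contains an induced } C_4 \text{ or } P_4, \]
which I would establish by translating the vertices and non-adjacencies of $\Gamma'$ back into labelled configurations of $\Gamma$.

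For the implication ``poisonous $\Rightarrow$ not LERF'' it is cleanest to argue at the group level, using that subgroup separability is inherited by subgroups and that a full subgraph $\gamma \le \Gamma$ yields an embedded parabolic subgroup $A(\gamma) \le A(\Gamma)$ (van der Lek); by the remark following Definition \ref{defpoisgr} it then suffices to treat each of the five minimal poisonous graphs. For types (1), (2), (4) and (5) the Tits subgroup generated by the squares of the standard generators (Crisp--Paris) already does the job: the four $2$-labelled sides give commuting squares while the two diagonals, being either missing or labelled $>2$, give non-commuting squares, so this subgroup is the RAAG on $C_4$ (types 1, 4, 5) or on $P_4$ (type 2), each of which is non-LERF and in the first case is $F_2 \times F_2$. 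For type (3) the square subgroup is free and detects nothing, so here the finer graph $\Gamma'$ is genuinely required: I would show that a connected three-vertex graph with at most one edge labelled $2$ forces an induced $C_4$ or $P_4$ in its associated $\Gamma'$, via the dual generators that \cite{JS} attaches to the edges of label $\ge 3$, whence $A(\gamma)$ fails to be LERF by \cite{AL}.

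For the converse I would prove the contrapositive at the level of graphs: if $\Gamma'$ contains an induced $C_4$ or $P_4$, then the four generating elements involved, traced back to the standard generators and edge-duals that define them, span a full subgraph of $\Gamma$ of one of the five poisonous types. This is the combinatorial heart of the argument and the step I expect to be the main obstacle, precisely because the construction of $\Gamma'$ branches on the parity and size of each label: an even label $2k \ge 4$ contributes a central dual generator whose commutation behaviour differs from that of the generator squares, whereas odd labels contribute none, so the adjacencies of $\Gamma'$ must be decoded case by case. The bookkeeping is to enumerate the possible ways a $4$-cycle, or a length-three path of non-adjacencies, can arise among squares and duals, and to verify that each such pattern pulls back to exactly a square of $2$'s, a $2$-labelled non-closed path of length three, a small connected triangle or path carrying high labels, or one of the two square-with-diagonal graphs of Definition \ref{defpoisgr}.

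Combining the two implications with the \cite{AL}/\cite{MR} equivalence then yields the theorem. I expect the only delicate points to be the faithful reconstruction of $\Gamma'$ from $\Gamma$ in the even-label case and the exhaustiveness of the case analysis in the converse; the forward direction, by contrast, should follow routinely once the five base graphs are dispatched, since four of them are already visible through the classical square subgroup and only type (3) requires the generalized construction.
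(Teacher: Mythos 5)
Your converse direction contains a genuine gap, and it is fatal to the proposed structure. The plan rests on the claim that \cite{AL} attaches to $A(\Gamma)$ a RAAG $A(\Gamma')$ ``in such a way that subgroup separability of $A(\Gamma)$ is governed by that of $A(\Gamma')$.'' That is not what \cite{AL} proves: its criterion (Theorem \ref{teogeral} here) is that $A(\Gamma)$ is LERF if and only if $\Gamma\in\mathcal{S}$, the class built from one- and two-vertex graphs by disjoint unions and $2$-cones. The Jankiewicz--Schreve RAAG subgroup only ever yields the \emph{negative} direction: if $A(\Gamma')$ is not LERF, then $A(\Gamma)$ is not LERF by Theorem \ref{teosublerf}. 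The direction you need cannot be obtained this way, because $A(\Gamma')$ has infinite index in $A(\Gamma)$ in general, and LERF-ness does not ascend from infinite-index subgroups (a non-LERF group can easily contain large LERF subgroups, \eg $F_2\times F_2$ contains $F_2$). Consequently, even if you proved your combinatorial equivalence ``$\Gamma$ poisonous $\iff$ $\Gamma'$ contains an induced $C_4$ or $P_4$'' --- which is in fact true, but only as a \emph{consequence} of the theorem you are trying to prove --- it would give you nothing beyond the forward implication. What is missing is exactly the heart of the paper's argument: an induction on $|V(\Gamma)|$ showing that every non-poisonous graph lies in $\mathcal{S}$, carried out by applying Lemma \ref{lemacongamma} to proper full subgraphs to locate cone vertices and using the five poisonous configurations to eliminate every obstruction; LERF-ness then follows from Theorem \ref{teogeral}, whose sufficiency half is structural (Burns' theorem for free products, Theorem \ref{teoerf} for $2$-cones) and has nothing to do with RAAG subgroups.

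Your forward direction, by contrast, is essentially workable: it re-derives by hand (via Crisp--Paris for types 1, 4, 5, Metaftsis--Raptis for types 1 and 2, and \cite{JS} for type 3) what the paper obtains in one line from Lemma \ref{lemacongamma}, since every poisonous subgraph $\gamma$ is connected, has at least three vertices, and satisfies $Z(\gamma)=\emptyset$. But note two caveats. First, your final paragraph asserts that odd labels contribute no dual generator; this contradicts your own treatment of type (3) and is false as a description of \cite{JS}, whose extra generators are (powers of) the centers of the irreducible dihedral parabolics, present for \emph{every} label $m\geq 3$ (generated by $(ab)^m$ for $m$ odd and $(ab)^{m/2}$ for $m$ even). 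If odd labels truly contributed nothing, your type (3) argument would collapse for, say, the $(3,3,3)$ triangle, whose square subgroup is merely $F_3$. Second, the Generalized Tits Conjecture is not known for all Artin groups --- \cite{JS} verify it for restricted classes --- so the RAAG $\Gamma'$ is not even globally available as you assume; it suffices for the small poisonous subgraphs, but this is one more reason your global reduction cannot be set up as stated.
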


From this result, we obtain two applications. The first one characterizes the ERF Artin groups, which are actually the finitely generated free abelian groups.

\begin{teo}\label{teoartinerf}
 Let $A=A(\Gamma)$ be an Artin group. Then the following are equivalent:
 \begin{enumerate}
   \item
   $A$ is ERF;

   \item
   $\Gamma$ is complete with all edges labeled by 2;

   \item
   $A$ is free abelian.
 \end{enumerate}
 \end{teo}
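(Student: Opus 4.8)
The plan is to prove the cycle $(2)\Rightarrow(3)\Rightarrow(1)\Rightarrow(2)$, pushing all the genuine content into the last implication; in particular \emph{Theorem~\ref{teopois} is not needed here}, since the obstruction to ERF is the much cruder presence of a nonabelian free subgroup. For $(2)\Rightarrow(3)$: if $\Gamma$ is complete with every edge labeled $2$, then each pair of standard generators commutes and there are no other relations, so $A(\Gamma)$ is the right-angled Artin group on a complete graph, i.e. $\mathbb{Z}^{n}$ with $n=|V(\Gamma)|$, which is free abelian. For $(3)\Rightarrow(1)$: a finitely generated free abelian group is ERF. Indeed, for any subgroup $H\leq\mathbb{Z}^{n}$ the quotient $\mathbb{Z}^{n}/H$ is finitely generated abelian, hence residually finite; given $g\notin H$ its image is nontrivial and is separated from $1$ by some finite-index subgroup of $\mathbb{Z}^{n}/H$, whose preimage is a finite-index subgroup of $\mathbb{Z}^{n}$ containing $H$ but not $g$. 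Thus every subgroup is an intersection of finite-index subgroups, so $\mathbb{Z}^{n}$ is ERF.

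For $(1)\Rightarrow(2)$ I would first record two stability properties of ERF. It passes to subgroups: if $K\leq H\leq G$ and $K=\bigcap_i U_i$ with $U_i$ of finite index in $G$, then $K=\bigcap_i(U_i\cap H)$ exhibits $K$ as closed in $H$. More importantly, it passes to \emph{quotients}: if $G$ is ERF and $N\trianglelefteq G$, any subgroup $\bar K\leq G/N$ has preimage $K\supseteq N$, which is closed in $G$; writing $K$ as the intersection of the finite-index subgroups $U\supseteq K$, each such $U$ contains $N$, so $U/N$ has finite index in $G/N$ and $\bar K=\bigcap U/N$ is closed. This is exactly where ERF behaves better than LERF: there is no finite-generation constraint, so the preimage argument is unobstructed (the reason LERF fails to be quotient-closed is that the preimage of a finitely generated subgroup need not be finitely generated).

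Now I argue the contrapositive of $(1)\Rightarrow(2)$. If $\Gamma$ is not complete with all edges labeled $2$, there are vertices $a,b$ that are either nonadjacent or joined by an edge labeled $m\geq 3$. By the standard description of parabolic (special) subgroups of Artin groups, $\langle a,b\rangle$ is the Artin group of the induced subgraph: either $F_2$ directly, or the dihedral Artin group $A(I_2(m))$ with $m\geq 3$. In the second case this parabolic still contains a nonabelian free subgroup (for $m=3$ this is visible from $B_3/Z\cong PSL_2(\mathbb{Z})\cong\mathbb{Z}_2*\mathbb{Z}_3$, and in general the central quotient is non-elementary virtually free). So in every case $A$ contains $F_2$, and by subgroup-stability of ERF it suffices to show that $F_2$ is not ERF.

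The hard part is exactly this last claim. Since every finitely generated subgroup of a free group is separable (Hall), any witness to the failure of ERF in $F_2$ must be infinitely generated, so rather than exhibiting one I would invoke quotient-stability. Every $2$-generated group is a quotient of $F_2$; so I fix a finitely generated group $W$ that is not subgroup separable --- for instance $\mathbb{Z}\wr\mathbb{Z}$ or the solvable Baumslag--Solitar group $BS(1,2)=\langle a,t\mid tat^{-1}=a^{2}\rangle$, both $2$-generated and known to fail LERF. Such a $W$ is not LERF, hence not ERF, and is a quotient of $F_2$; by quotient-stability, were $F_2$ ERF then $W$ would be ERF, a contradiction. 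Therefore $F_2$ is not ERF, which closes $(1)\Rightarrow(2)$ and the cycle. The two points that must be pinned down with care are a clean reference that the chosen $W$ genuinely fails LERF, and the fact that $A(I_2(m))$ with $m\geq3$ contains $F_2$; everything else is formal.
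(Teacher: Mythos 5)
Your proposal is correct, but it takes a genuinely different route from the paper's. The paper proves the easy implications $(2)\Rightarrow(3)\Rightarrow(1)$ essentially as you do, but for $(1)\Rightarrow(2)$ it stays entirely inside its own Artin-group machinery: assuming $A$ is ERF while $\Gamma$ is not complete with all labels $2$, it adjoins to $\Gamma$ two new vertices $w_1,w_2$ joined by an edge labeled $3$ and $2$-adjacent to every vertex of $\Gamma$, so that the resulting graph $\Gamma_1$ satisfies $A(\Gamma_1)\simeq A\times A(e)$; the factor $A(e)$ is LERF by Theorem \ref{teogeral}, so by the Allenby--Gregorac theorem (Theorem \ref{teoerf}) this product of an ERF group with a LERF group would be LERF, yet $\Gamma_1$ visibly contains a $4$-poisonous or $5$-poisonous full subgraph on $\{u,v,w_1,w_2\}$, contradicting Theorem \ref{teopois}. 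You instead isolate the crude obstruction: ERF passes to subgroups and to arbitrary quotients (both of your verifications are correct), a non-complete graph or a label $\geq 3$ produces a parabolic subgroup $F_2$ or $A(I_2(m))$, and $F_2$ is not ERF because it surjects onto a non-LERF two-generated group. What your route buys: it is independent of Theorem \ref{teopois} and of \cite{AG}, and it proves a strictly more general principle --- no group containing a nonabelian free subgroup can be ERF. What the paper's route buys: it requires nothing outside what the paper has already proved or cited, whereas your two flagged points do need care. The choice $W=BS(1,2)$ is classical and safe (in any finite quotient the image of the base $\mathbb{Z}[1/2]$ is $2$-divisible, hence of odd order, which forces the image of $t^{-1}at$ into the image of $\langle a\rangle$, so the finitely generated subgroup $\langle a\rangle$ is not separable), but the claim that $\mathbb{Z}\wr\mathbb{Z}$ is ``known to fail LERF'' is much less standard and you should not lean on it; and for $F_2\leq A(I_2(m))$ with $m\geq 3$, the cleanest citation is Crisp--Paris \cite{CP}, already in the paper's bibliography, which gives that $\langle a^2,b^2\rangle$ is free of rank two, avoiding your central-quotient argument and the splitting of central extensions over free groups.
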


For the second application, we recall a group $G$ is called {\bfseries coherent} if every finitely generated subgroup of $G$ is finitely presented. It is well known that finite groups, free groups and fundamental groups of $2$ (or $3$)-manifolds, for instance, are coherent and coherence carries over to subgroups and quotients.
The group $F_2 \times F_2$ is not coherent, where $F_2$ is the free group of rank two, which implies $SL_n(\mathbb{Z})$ is not coherent for $n\geq 4$; it is an old open problem suggested by J.P. Serre in \cite[Problem F14]{Ser}  if $SL_3(\mathbb{Z})$ is coherent. A good survey on the subject may be found at \cite{Wi3}. 

The full description of coherent Artin groups is already stablished, as we will explain in Section 4. By using Theorem \ref{teopois} we can relate the two properties.

\begin{teo}\label{teocoh}
  Every subgroup separable Artin group is coherent. A coherent Artin group $A=A(\Gamma)$ is subgroup separable if and only if every non-closed full path of $\Gamma$ with length greater than 1 has length 2 with both edges labeled by 2.
\end{teo}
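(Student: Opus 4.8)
\emph{The plan.} The plan is to derive both assertions from Theorem~\ref{teopois}, which equates subgroup separability of $A(\Gamma)$ with $\Gamma$ not being poisonous, together with the classification of coherent Artin groups recalled in Section~4. I will use two standard facts (Section~\ref{preliminaries}): for a full subgraph $\Theta\subseteq\Gamma$ the canonical map $A(\Theta)\to A(\Gamma)$ is injective, and if $\Theta$ is a \emph{join} $\Theta_1\ast\Theta_2$ --- every vertex of $\Theta_1$ joined to every vertex of $\Theta_2$ by an edge labeled $2$ --- then $A(\Theta)\cong A(\Theta_1)\times A(\Theta_2)$. Since coherence is inherited by subgroups while $F_2\times F_2$ is incoherent, any full subgraph $\Theta$ with $A(\Theta)\supseteq F_2\times F_2$ witnesses that $A(\Gamma)$ is incoherent; I will also use that the Artin group of a single edge labeled $m\ge 3$ contains $F_2$.

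\emph{A combinatorial lemma.} First I would show: if some non-closed full path $\pi$ of length $>1$ violates the stated condition, then $\Gamma$ is poisonous. If every edge of $\pi$ is labeled $2$ then $\pi$ has length $\ge 3$, and four consecutive vertices span an induced path of length three labeled $2$, so $\Gamma$ is $2$-poisonous. If instead some edge of $\pi$ is labeled $>2$, three consecutive vertices containing it span a connected $3$-vertex full subgraph (a length-two path) with at most one edge labeled $2$, so $\Gamma$ is $3$-poisonous. Hence ``$\Gamma$ not poisonous'' forces the path condition, which by Theorem~\ref{teopois} is precisely the ``only if'' half of the second statement --- and this half needs no coherence hypothesis.

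\emph{The converse half (the crux).} Now assume $A(\Gamma)$ is coherent and the path condition holds; I want $\Gamma$ not poisonous, so that Theorem~\ref{teopois} applies. Suppose, for contradiction, that $\Gamma$ contains a full poisonous subgraph $\gamma$ of some type $1$--$5$. Type $2$ and the path version of type $3$ produce an induced path of length three, or a length-two path carrying an edge labeled $>2$; either contradicts the path condition. The remaining witnesses are types $1$, $4$, $5$, and the triangle version of type $3$. Conditions $1$, $4$, $5$ are exactly the joins $\Theta_1\ast\Theta_2$ of two two-vertex graphs with all crossing edges labeled $2$, each $\Theta_i$ being either edgeless or an edge labeled $\ge 3$; by the join decomposition $A(\gamma)\cong A(\Theta_1)\times A(\Theta_2)$ with each factor containing $F_2$, so $A(\gamma)\supseteq F_2\times F_2$, contradicting coherence. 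A type-$3$ triangle is a triangle with at least two edges labeled $\ge 3$, and by the classification of Section~4 its Artin group is incoherent, again a contradiction. Thus $\Gamma$ is not poisonous, giving the ``if'' half.

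\emph{The first assertion and the main obstacle.} For ``subgroup separable $\Rightarrow$ coherent'' I would argue the contrapositive: if $A(\Gamma)$ is incoherent then, by the classification of Section~4, $\Gamma$ contains a full subgraph from an explicit list of non-coherent configurations, and I would check that each such configuration is poisonous (the joins above are $1$-, $4$-, $5$-poisonous; triangles with two large labels are $3$-poisonous; a non-chordal cycle $C_n$ with $n\ge 4$ contains an induced length-three path, hence is $2$-poisonous, and so on); since a poisonous full subgraph makes the ambient graph poisonous, $\Gamma$ is then poisonous and $A(\Gamma)$ is not subgroup separable. The main obstacle is exactly this comparison of the two forbidden-subgraph lists: on the coherence side it requires the external input that triangle Artin groups with two labels $\ge 3$ are incoherent (the one poisonous family not visibly containing $F_2\times F_2$), and completeness demands that \emph{every} non-coherent configuration in the classification --- not only the $F_2\times F_2$ ones but also the longer cycles and any higher-complexity cases --- be matched to a poisonous subgraph.
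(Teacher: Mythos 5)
Your overall strategy matches the paper's: both assertions are reduced to Theorem~\ref{teopois} plus the Gordon--Wise classification (Theorem~\ref{teoartincoh}), and your ``only if'' argument (the combinatorial lemma) is exactly the paper's. Where you genuinely diverge is the treatment of poisonous types 1, 4, 5 in the ``if'' direction: the paper reads these off from Theorem~\ref{teoartincoh} itself (chordality excludes type 1, the clause on complete $4$-vertex subgraphs excludes type 5, and ``not $4$-poisonous'' is literally one of its hypotheses), whereas you observe that these graphs are joins, so the corresponding parabolic subgroups decompose as direct products of groups each containing $F_2$, hence contain the incoherent $F_2\times F_2$. Both work; your route is more self-contained group theory (it even shows the incoherence of types 1, 4, 5 is elementary, independent of Wise's theorem), but it needs the extra fact, used nowhere in the paper, that a dihedral Artin group with label $m\ge 3$ contains $F_2$ --- true and standard (its central quotient is a non-elementary free product of cyclics), but it should be cited or proved. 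The type-3 triangle, as you correctly note, cannot be handled by an $F_2\times F_2$ argument and must defer to the classification.

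The one step that fails as written is in your proof of the first assertion: ``a non-chordal cycle $C_n$ with $n\ge 4$ contains an induced length-three path, hence is 2-poisonous.'' This is false twice over: an induced $4$-cycle contains no induced path of length three (any four of its vertices span the closed cycle itself), and $2$-poisonousness requires all edges of the path to be labeled $2$, which an arbitrary induced cycle does not provide. The repair is the paper's case split: if all edges of the induced cycle are labeled $2$, it is $1$-poisonous when $n=4$ and $2$-poisonous when $n\ge 5$ (take four consecutive vertices); if some edge $\{u,v\}$ is labeled $m>2$, then $u$, $v$ and a neighbor $w$ of $v$ on the cycle span an induced path of length two with at most one edge labeled $2$, which is $3$-poisonous. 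With that correction (and the analogous easy reduction for a complete $4$-vertex subgraph whose two large-label edges share a vertex: it contains a $3$-poisonous triangle), your matching of the two forbidden-subgraph lists is complete --- the ``higher-complexity cases'' you worry about at the end do not exist, since Theorem~\ref{teoartincoh} lists only chordality, the complete $3$/$4$-vertex condition, and $4$-poisonousness.
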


This paper is organized as follows: in Section 2 we explain some basic definitions and previously known results on the subject; in Section 3 we prove the main result; and in Section 4 we establish the applications of the main result.

\section{Preliminaries}\label{preliminaries}

Let $\Gamma$ be a finite simplicial graph, with edges labeled by integers greater than 1. Then the {\bfseries Artin group with  $\Gamma$ as underlying graph}, denoted by $A(\Gamma)$, is given by a finite presentation, with generators corresponding to the vertices of $\Gamma$ and relations given by

  \[  \underbrace{abab\cdots}_{m \text{ factors}}=\underbrace{baba\cdots}_{m \text{ factors}} \]

    for each edge of $\Gamma$ that connects the vertices $a$ and $b$ labeled by $m$. In that case we will say  $a$ and $b$ are {\bfseries $m$-adjacent}.

Let $V(\Gamma)$ be the set of vertices of $\Gamma$ and $E(\Gamma)$ be its set of edges. A subgraph $\gamma$ of $\Gamma$ is called a {\bfseries full} or {\bfseries induced} subgraph (generated by $V(\gamma)$) if every two vertices of $\gamma$ that are adjacent in $\Gamma$ are also adjacent in $\gamma$. We will denote the full subgraph generated by a subset $V_1$ of $V(\Gamma)$ by $S(V_1)$. We will also say $\gamma$ is a {\bfseries full path} of $\Gamma$ if it is a (non-closed) path and a full subgraph of $\Gamma$.

A subset $V_0\subset V(\Gamma)$ generates a subgroup of $A(\Gamma)$ that is isomorphic to $A(\Gamma_0$), where $\Gamma_0$ is the full subgraph of $\Gamma$ generated by $V_0$ \cite{L} - such subgroups are called {\bfseries parabolic subgroups} of $A(\Gamma)$.

We will strongly use the behaviour of subgroup separability under taking subgroups.

\begin{teo}[\cite{S},\cite{S2}]\label{teosublerf}
Let $H$ be a subgroup of $G$. If $H$ is not LERF then $G$ is not LERF either. The converse holds if $[G:H]<\infty$.
\end{teo}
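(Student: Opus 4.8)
The plan is to prove the two directions separately, working throughout in the topological formulation: a group is LERF exactly when every finitely generated subgroup is closed in its profinite topology, and recalling that cosets of \emph{all} finite-index subgroups (not only normal ones) are open, since any finite-index subgroup contains its finite-index normal core. The comparison underlying everything is that for $H \leq G$, intersecting a finite-index subgroup of $G$ with $H$ gives a finite-index subgroup of $H$, so the subspace topology that $H$ inherits from the profinite topology of $G$ is coarser than the profinite topology of $H$; moreover, when $[G:H]<\infty$ these coincide, because a finite-index subgroup of $H$ is also finite-index in $G$.

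For the first assertion I would argue by contrapositive and show that $G$ LERF implies $H$ LERF. Let $K \leq H$ be finitely generated; then $K$ is a finitely generated subgroup of $G$, hence closed in the profinite topology of $G$. Since $K = K \cap H$, it is closed in the subspace topology on $H$, and because that topology is coarser than the profinite topology of $H$, the set $K$ is a fortiori closed there. Thus every finitely generated subgroup of $H$ is separable, so $H$ is LERF; equivalently, $H$ not LERF forces $G$ not LERF.

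For the converse under $[G:H]<\infty$, take $K \leq G$ finitely generated and set $K_0 = K \cap H$. As $[K:K_0] \leq [G:H] < \infty$, the subgroup $K_0$ has finite index in the finitely generated group $K$, hence is itself finitely generated; being a finitely generated subgroup of the LERF group $H$, it is closed in the profinite topology of $H$. Since $H$ has finite index, that topology equals the subspace topology from $G$, so $K_0 = C \cap H$ for some $G$-closed set $C$. As $H$ is clopen in $G$ (a finite-index subgroup is open, and its complement is a finite union of open cosets), it follows that $K_0$ is closed in the profinite topology of $G$. To pass from $K_0$ to $K$, I would choose left-coset representatives $t_i$ for $H$ in $G$: each nonempty intersection $K \cap t_iH$ equals $k_iK_0$ for any $k_i$ in it, so $K = \bigcup_i k_i K_0$ is a finite union of cosets of $K_0$. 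Left translation is a homeomorphism of the profinite topology of $G$ since it permutes the basic open cosets, so each $k_iK_0$ is closed, and a finite union of closed sets is closed. Hence $K$ is closed in $G$, and $G$ is LERF.

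The main points requiring care, which are also where the finite-index hypothesis is used essentially, are: (i) that $K_0 = K \cap H$ be finitely generated, resting on the fact that finite-index subgroups of finitely generated groups are finitely generated (Nielsen--Schreier/Schreier); and (ii) the promotion from ``closed in $H$'' to ``closed in $G$'', which needs both that $H$ be clopen and that $K$ decompose into finitely many translates of $K_0$. Without finite index neither holds---$K_0$ may fail to be finitely generated and $H$ need not be clopen---so only the downward implication survives in general, which matches the asymmetry built into the statement.
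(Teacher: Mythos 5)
Your proof is correct and complete: the downward direction via the observation that the subspace topology on $H$ induced by the profinite topology of $G$ is coarser than the profinite topology of $H$, and the upward direction via finite generation of $K_0=K\cap H$ (Schreier), the coincidence of the two topologies on a finite-index subgroup, clopenness of $H$, and the decomposition of $K$ into finitely many closed translates $k_iK_0$, are all sound, and you correctly isolate where finite index is essential. Note that the paper itself offers no proof of this statement --- it is quoted from Scott \cite{S}, \cite{S2} --- and your argument is essentially the standard one given in those references, so there is no divergence of approach to report.
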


Although it is not true in general that subgroup separability is preserved by semidirect products, it is true if one of the groups is ERF. 

\begin{teo}[\cite{AG}, Th. 4]\label{teoerf}
  Let $N$ be an ERF group and $M$ be a LERF group. Then any semidirect product $N\rtimes M$ is LERF.
\end{teo}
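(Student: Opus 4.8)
The plan is to analyze $G=N\rtimes M$ through its defining short exact sequence $1\to N\to G\xrightarrow{\pi}M\to 1$, where $\pi$ is the projection onto $M$, $N=\ker\pi$, together with the canonical splitting $M\hookrightarrow G$. Fix a finitely generated subgroup $H\le G$ and an element $g\in G\setminus H$; proving that $G$ is LERF amounts to producing a finite-index subgroup $K\le G$ with $H\le K$ and $g\notin K$. Set $H_N=H\cap N=\ker(\pi|_H)$, so that $\pi(H)$ is a finitely generated subgroup of $M$. If $\pi(g)\notin\pi(H)$, I would simply invoke that $M$ is LERF: $\pi(H)$ is separable in $M$, hence some finite-index $\overline{K}\le M$ contains $\pi(H)$ and omits $\pi(g)$, and $K=\pi^{-1}(\overline{K})$ does the job. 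Otherwise $\pi(g)\in\pi(H)$, and after multiplying $g$ on the left by a suitable element of $H$ (which changes neither the membership problem nor the conclusion, since any candidate $K$ will contain $H$) I may assume $g\in N$, whence $g\notin H$ forces $g\in N\setminus H_N$. This is the essential case.

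For the essential case I would bring in, for the first time, the full force of the ERF hypothesis on $N$: since $H_N$ is a (not necessarily finitely generated) subgroup of the ERF group $N$ and $g\notin H_N$, separability of $H_N$ in $N$ yields a finite-index subgroup $L\le N$ with $H_N\le L$ and $g\notin L$. The point is to upgrade $L$ into a subgroup of $G$ that still contains $H$ and omits $g$. Because $H$ is finitely generated and normalizes $H_N$, its conjugation action on $N$ is given by finitely many automorphisms that stabilize $H_N$, and a direct computation shows this action factors through $\pi(H)$ via the structural homomorphism $\theta\colon M\to\Aut(N)$, up to inner automorphisms of $N$. If $L$ can be chosen invariant under this action, then $K_0:=HL=LH$ is a subgroup, the Dedekind modular law gives $K_0\cap N=H_N L=L$ (so $g\notin K_0$), and $\pi(K_0)=\pi(H)$.

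To promote $K_0$ to finite index I would enlarge simultaneously in the $M$-direction. Using that LERF passes to all subgroups (the first assertion of Theorem \ref{teosublerf}), I would fix a finite-index subgroup $\overline{K}\le M$ with $\pi(H)\le\overline{K}$ and arrange the separating subgroup to be normal in $N$ and invariant under $\theta(\overline{K})$; call it $P\triangleleft N$, so that $P\triangleleft N\rtimes\overline{K}=:G_0$, a finite-index subgroup of $G$ containing $H$. The quotient $G_0/P\cong(N/P)\rtimes\overline{K}$ then has finite kernel $N/P$, its image of $H$ meets $N/P$ trivially (as $H_N\le P$) while its image of $g$ is a nontrivial element of $N/P$, so the image of $g$ avoids the image of $H$. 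The problem thus reduces to the finite-by-LERF case: here $\overline{K}$ is LERF and the kernel is finite, so the group is residually finite, the finitely generated image of $H$ is separable, and $g$ is separated in a genuine finite quotient, which pulls back to the desired $K$. Finiteness of the index is confirmed by the multiplicativity $[G:K]=[M:\pi(K)]\,[N:K\cap N]$.

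The main obstacle is exactly the invariance-and-normality requirement of the previous paragraph. A finite-index subgroup of $N$ separating $H_N$ from $g$ need be neither normal in $N$ nor invariant under the (in general infinite) group $\theta(\pi(H))$, and naively intersecting all of its conjugates can destroy finite index when $N$ fails to be finitely generated; moreover $H_N$ is known to be closed only for the full profinite topology of $N$, whereas what is needed is closedness in the coarser topology generated by the $\theta(\overline{K})$-invariant, $N$-normal, finite-index subgroups, and these topologies genuinely differ. Bridging them is precisely where the ERF hypothesis is used beyond mere residual finiteness. The strategy to overcome it is to control the orbit of $L$: since $\pi(H)$ is finitely generated, the action is carried by finitely many automorphisms, and one passes to a finite-index $\overline{K}\le M$ stabilizing a suitable refinement of $L$ obtained by intersecting finitely many conjugates and normal cores, via an orbit-stabilizer argument on the finite-index subgroups of $N$ of bounded index. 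Verifying that this refinement remains finite-index, normal in $N$, invariant, and separating is the technical heart of the argument, and it is where the proof of \cite{AG} concentrates its work.
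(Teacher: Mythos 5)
Your two reductions are fine (separating via $M$ when $\pi(g)\notin\pi(H)$, translating by an element of $H$ to assume $g\in N\setminus H_N$, the Dedekind computation, and the finite-kernel endgame via Theorem \ref{teosublerf}), and for what it is worth the paper itself offers no argument to compare against: it quotes the result directly from \cite{AG}. But your essential case contains a genuine gap that is not merely a deferred technicality. You require a separating subgroup $P$ with $H_N\le P$, $g\notin P$ and $P\triangleleft N\rtimes\overline{K}$; since $N\le N\rtimes\overline{K}$, this forces $P\triangleleft N$, hence $P\supseteq\langle H_N^N\rangle$, the normal closure of $H_N$ in $N$. Such a $P$ simply cannot exist when $g\in\langle H_N^N\rangle\setminus H$, and this happens inside ERF groups: take $N$ the discrete Heisenberg group $\langle x,y\mid [x,[x,y]]=[y,[x,y]]=1\rangle$ (polycyclic, hence ERF), $M$ trivial or $M=\mathbb{Z}$ acting trivially, $H=H_N=\langle x\rangle$, and $g=y^{-1}xy=x[x,y]\notin H$. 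Every finite-index $P\triangleleft N$ containing $x$ contains $g$, so your scheme terminates in a contradiction even though the theorem holds trivially there; indeed the non-normal subgroup $L=\langle x,y^2,[x,y]^2\rangle$ of index $4$ separates $g$ from $H$ at once. So the step you call the ``technical heart'' is not something to be verified by a cleverer core argument: the reduction to a quotient $(N/P)\rtimes\overline{K}$ with finite kernel is unavailable in general, and any correct proof must work with non-normal finite-index subgroups $L\le N$ (which is exactly what ERF supplies) and assemble the final finite-index $K$ without passing to a quotient of $N$.

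A second, independent problem is that your invariance mechanism --- ``intersecting finitely many conjugates and normal cores, via an orbit-stabilizer argument on the finite-index subgroups of $N$ of bounded index'' --- presupposes that $N$ has only finitely many subgroups of a given finite index, i.e.\ essentially that $N$ is finitely generated, which the statement does not assume; and even for finitely generated $N$, the group $\theta(\overline{K})$ is in general infinite, the $\theta(\overline{K})$-orbit of $L$ can be infinite, and intersecting it can destroy finite index. You flag this honestly, but the remedy you sketch assumes exactly the finiteness that fails, so the argument does not close. In short: Case 1 and the translations are correct, the conditional statements about $K_0=HL$ are correct as stated, but the passage from the ERF-supplied $L$ to an invariant, $N$-normal $P$ containing $H_N$ is impossible in general, and with it the whole essential case collapses; this is precisely the difficulty the actual proof in \cite{AG} is designed to circumvent rather than confront.
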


We define a {\bfseries 2-cone} of a graph $\Gamma'$ (with apex $u$), denoted by $\Gamma=\Gamma'+\{u\}$, as the graph $\Gamma$ with $V(\Gamma)=V(\Gamma')\sqcup \{u\}$ and such that $u$ is 2-adjacent in $\Gamma$ to every vertex of $\Gamma'$. Let $\mathcal{S}$ be the class of graphs  obtained from graphs with one or two vertices by taking disjoint unions and 2-cones.  The result  below is the known characterization of subgroup separable Artin groups.

\begin{teo}\label{teogeral}(\cite{AL})
  Let $A=A(\Gamma)$ be an Artin group, with $\Gamma$ as its underlying graph. Then $A$ is subgroup separable if and only if $\Gamma\in \mathcal{S}$.
\end{teo}

If $\Gamma$ is a graph, we also define
$$Z(\Gamma):=\{u\in V(\Gamma)\tq \text{$u$ is 2-adjacent to every $v\in V(\Gamma)$ such that $v\neq u$}\}$$

and recall the lemma below, which is essentially a particular case of Theorem \ref{teogeral}.

\begin{lema}\label{lemacongamma}(\cite{AL}, Lemma 4.6)
 Let $A=A(\Gamma)$ be a LERF Artin group such that $\Gamma$ is connected and has at least three vertices. Then $Z(\Gamma)\neq \emptyset$.
\end{lema}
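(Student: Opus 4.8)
The plan is to derive this lemma directly from the structural characterization in Theorem~\ref{teogeral}. Since $A$ is LERF, that theorem gives $\Gamma \in \mathcal{S}$, so $\Gamma$ is built up from graphs with one or two vertices by repeatedly applying disjoint unions and $2$-cones. I would then argue by examining the final operation used to construct $\Gamma$ within this inductive description: the claim is that, under the hypotheses that $\Gamma$ is connected and has at least three vertices, this last operation must be a $2$-cone, whose apex will furnish the desired element of $Z(\Gamma)$.

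First I would rule out the alternatives. The base graphs in $\mathcal{S}$ have one or two vertices, so $\Gamma$ cannot be a base graph, as it has at least three vertices. Next, every graph in $\mathcal{S}$ is nonempty (the empty graph is not a base case, and none of the operations can produce it from nonempty inputs), so a disjoint union of two graphs in $\mathcal{S}$ always has at least two connected components and is therefore disconnected. Since $\Gamma$ is connected, its last construction step cannot be a disjoint union. By elimination, $\Gamma$ must arise as a $2$-cone, say $\Gamma = \Gamma' + \{u\}$ with $\Gamma' \in \mathcal{S}$.

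By the definition of a $2$-cone, the apex $u$ is $2$-adjacent in $\Gamma$ to every vertex of $\Gamma'$, that is, to every vertex $v \in V(\Gamma)$ with $v \neq u$. Hence $u \in Z(\Gamma)$, and in particular $Z(\Gamma) \neq \emptyset$, as required.

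The main point requiring care is the structural dichotomy in the second step: one must verify that the recursive description of $\mathcal{S}$ genuinely forces a connected graph on at least three vertices to be presentable as a $2$-cone. This rests on the two observations that every member of $\mathcal{S}$ is nonempty and that a nontrivial disjoint union is disconnected, which together guarantee that connectivity is incompatible with the last operation being a disjoint union; once these are in place the conclusion is immediate.
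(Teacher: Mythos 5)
Your proof is correct and matches the paper's own treatment: the paper does not reprove this lemma (it cites \cite{AL}, Lemma 4.6) but explicitly remarks that it ``is essentially a particular case of Theorem \ref{teogeral}'', which is precisely your derivation. Your argument---$\Gamma \in \mathcal{S}$ by Theorem \ref{teogeral}, the last construction step cannot be a base graph (too few vertices) or a disjoint union (would be disconnected, since all members of $\mathcal{S}$ are nonempty), so it is a $2$-cone whose apex lies in $Z(\Gamma)$---fills in exactly the intended details and is valid within this paper's logical structure, where Theorem \ref{teogeral} is taken as established.
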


\section{Proof of Theorem \ref{teopois}}

 Suppose $\Gamma$ is poisonous. Then $\Gamma$ contains a full subgraph $\gamma$ that satisfies one of the conditions of Definition \ref{defpoisgr}. Then $A(\gamma)$ is not LERF by Lemma \ref{lemacongamma} hence $A$ is not LERF by Theorem \ref{teosublerf}.

  Now suppose $\Gamma$ is not poisonous and let $V=V(\Gamma)$. We will proceed by induction on $|V|$ to prove that $A$ is LERF. By Theorem \ref{teogeral} it is enough to prove $\Gamma \in \mathcal{S}$.

  We will use the following notation: if $X\subset V$ then $S(X)$ is the full subgraph of $\Gamma$ generated by $X$.

  If $|V|$ is 1 or 2 then $\Gamma\in \mathcal{S}$ by definition. If $|V|=3$, then either $\Gamma$ is disconnected or $\Gamma$ is a 2-cone - either way, $\Gamma\in \mathcal{S}$.

   Let $n>3$, suppose the result is true for $|V|<n$ and let $A(\Gamma)$ be an Artin group such that $|V|=n$.

   Note that every non-empty proper full subgraph of $\Gamma$ belongs to $\mathcal{S}$. That is true because if $\Gamma'$ is a non-empty proper full subgraph of $\Gamma$ since $\Gamma$ is not poisonous then so is $\Gamma'$, then $A(\Gamma')$ is LERF by induction hypothesis hence $\Gamma'\in \mathcal{S}$ by Theorem \ref{teogeral}. We will freely use that fact during the proof. That also means it is enough to prove  $Z(\Gamma)\neq \emptyset$ because if that's the case then $\Gamma$ decomposes as a 2-cone of a proper full subgraph, which belongs to $S$ by the argument above.

 Suppose $\Gamma$ is disconnected. Then $\Gamma=\Gamma_1\cup \Gamma_2$ such that $\Gamma_1$ and $\Gamma_2$ are non-empty proper full subgraphs of $\Gamma$. Then $\Gamma_1,\Gamma_2\in \mathcal{S}$ so $\Gamma\in \mathcal{S}$.

 So we may assume $\Gamma$ is connected.

  Let $v\in V$ and let $\Gamma_0=S(V\setminus \{v\})$. Note that $\Gamma_0\in \mathcal{S}$, since it is a non-empty proper full subgraph of $\Gamma$.

  Now suppose $\Gamma_0$ is disconnected, let's say $\Gamma_0=\Gamma_{01}\cup \cdots \cup \Gamma_{0k}$, with $\Gamma_{0i}$ connected for all $i$ and $k\geq 2$. Since $\Gamma$ is connected, then $v$ is adjacent to $\Gamma_{0i}$ for all $i$, let's say the edge $e_i$ connects $v$ to a vertex $u_{0i}$ of $\Gamma_{0i}$. Then the edges $e_i$ are all labeled by 2, otherwise $\Gamma$ would be $3$-poisonous. If there was a vertex of $\Gamma_{0i}$ which is $m$-adjacent to $v$, with $m>2$, then $\Gamma$ would also be $3$-poisonous, so every vertex of $\Gamma_0$ that is adjacent to $v$ is actually 2-adjacent to $v$. If all the vertices of $\Gamma_0$ are 2-adjacent to $v$ then $v\in Z(\Gamma)$ hence $\Gamma\in \mathcal{S}$. That means we may assume there is a vertex $w\in \Gamma_{0i}$ for some $i$ such that $w$ is not adjacent to $v$. Assume, without loss of generality, that $w\in \Gamma_{01}$. Note that $w\neq u_{01}$. 

  Suppose $w$ is adjacent to $u_{01}$. Then $w$ is actually 2-adjacent to $u_{01}$ otherwise $\Gamma$ would be 3-poisonous with $\gamma=S(w,u_{01},v)$. However that would mean $\Gamma$ is 2-poisonous, with $\gamma=S(w,u_{01},v,u_{02})$. That implies $w$ is not adjacent to $u_{01}$. Since $\Gamma_{01}$ is connected, that means $\Gamma_{01}$ has at least three vertices. On the other hand, $A(\Gamma_{01})$ is LERF since $\Gamma_{01}$  is a non-empty proper full subgraph of $\Gamma$. By Lemma \ref{lemacongamma} there is a vertex $u_{01}'\in Z(\Gamma_{01})$. Since $u_{01}'$ is 2-adjacent to $w$ which is not adjacent to $u_{01}$, we have $u_{01}\neq u_{01}' \neq w$. Then $u_{01}'$ is adjacent (hence 2-adjacent) to $v$, otherwise $\Gamma$ would be 2-poisonous with $\gamma=S(u_{01}',u_{01},v,u_{02})$. But that would also imply $\Gamma$ is 2-poisonous, with $\gamma=S(w,u_{01}',v,u_{02})$, which is a contradiction.

 So we may also assume $\Gamma_0$ is connected. Since $\Gamma_0$ is a proper full subgraph of $\Gamma$ with $|V|-1\geq 3$ vertices, by Lemma \ref{lemacongamma} there is a vertex $u\in Z(\Gamma_0)$, such that $\Gamma_0=\Gamma_1+\{u\}$. Let $V(\Gamma_1)=\{w_1,w_2,\dots, w_k\}$, with $k=|V|-2\geq 2$. 

If $u$ and $v$ are 2-adjacent, then $u\in Z(\Gamma)$ hence $\Gamma\in \mathcal{S}$. So we may assume $u$ and $v$ are not 2-adjacent.

Since $S(u,v,w_i)$ is not 3-poisonous for all $i$ then $w_i$ is 2-adjacent to $v$ for all $i$. Let $1<j\leq k$. Since $S(w_1,w_j,u,v)$ is not 1-poisonous, 4-poisonous nor 5-poisonous then $w_1$ is 2-adjacent to $w_j$. That implies $w_1\in Z(\Gamma)$ hence $\Gamma\in \mathcal{S}$.

\section{Applications}

\begin{proof}[Proof of Theorem \ref{teoartinerf}]
  If $\Gamma$ is complete with all edges labeled by 2 then $A$ is free abelian by definition. If $A$ is free abelian then $A$ is LERF; since $A$ is finitely generated abelian then every subgroup of $A$ is finitely generated hence $A$ is ERF.

  Suppose $A$ is ERF. Now suppose $\Gamma$ is not complete with all edges labeled by 2. Then there is a pair of vertices $u,v\in V=V(\Gamma)$ such that $u$ and $v$ are not 2-adjacent.

  Consider a new graph $\Gamma_1$ such that $V(\Gamma_1)=V\cup \{w_1,w_2\}$ and the edges of $\Gamma_1$ are:
  \begin{itemize}
    \item
    the edges of $\Gamma$;

    \item
    edges $\{\alpha, w_i\}$ labeled by 2 for each $\alpha\in V$ and $i=1,2$;

    \item
    an edge $\{w_1,w_2\}$ labeled by 3.
  \end{itemize}

  By the definition of an Artin group, it is easy to see that $A_1=A(\Gamma_1)\simeq A\times I$, where $I$ is the subgroup of $A_1$ generated by $w_1$ and $w_2$. Note that $I\simeq A(e)$, where $e$ is an edge labeled by 3.

  By Theorem \ref{teogeral}, $I$ is LERF hence $A_1$ is LERF by Theorem \ref{teoerf}. However $S(u,v,w_1,w_2)$ is either 4-poisonous (if $u$ and $v$ are not adjacent) or 5-poisonous (if $u$ and $v$ are $m$-adjacent for $m>2$), hence $\Gamma_1$ is poisonous. That contradicts Theorem \ref{teopois}.
  \end{proof}

We recall a graph is called \emph{chordal} if it contains no closed path of length greater than or equal to 4 as a full subgraph. The result below was rewritten under our notation.

\begin{teo}[\cite{G,Wi2}]\label{teoartincoh}
An Artin group $A=A(\Gamma)$ is coherent if and only if
$\Gamma$ is chordal, every complete subgraph of $\Gamma$ with 3 or 4 vertices has at most one edge label $>2$, and $\Gamma$ is not 4-poisonous.
\end{teo}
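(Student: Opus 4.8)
The plan is to prove the two implications separately, using throughout that coherence is inherited by finitely generated subgroups and that every two-generator Artin group is coherent: a single edge labelled $2$ gives $\mathbb{Z}^2$, while a single edge labelled $m>2$ gives a dihedral Artin group $A_m$, which is a central extension of a virtually free group. Since central extensions of free groups split, $A_m$ is virtually $F\times\mathbb{Z}$ and hence coherent; note also that $A_m$ contains a nonabelian free subgroup for $m>2$. I would keep as benchmarks the model incoherent group $F_2\times F_2$ and Droms' theorem that a right-angled Artin group is coherent if and only if its graph is chordal.

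For necessity (coherent $\Rightarrow$ the three conditions) I would argue by contraposition, exhibiting an incoherent subgroup whenever a condition fails. If $\Gamma$ is $4$-poisonous, the full subgraph $\gamma$ of Definition \ref{defpoisgr}(4) gives a parabolic $A(\gamma)\cong A_m\times F_2$ with $m>2$, so $A(\gamma)\supseteq F_2\times F_2$ and $A(\Gamma)$ is incoherent. If some complete subgraph on $3$ or $4$ vertices carries two edges labelled $>2$, then either the two big edges are disjoint with all other edges of the clique labelled $2$, giving a parabolic $A_m\times A_{m'}\supseteq F_2\times F_2$, or the two big edges share a vertex; every violation of (ii) reduces to one of these, and the shared-vertex case is a three-generator Artin group that one shows incoherent directly by producing a finitely generated, non-finitely-presented subgroup (this is the delicate small case). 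Finally, if $\Gamma$ is not chordal it contains an induced cycle of length $\geq 4$; when all its edges are labelled $2$ this is a non-chordal right-angled Artin group, incoherent by Droms, and the general labelling is reduced to an embedded non-chordal right-angled Artin group through the (generalized) Tits-conjecture description of distinguished subgroups of $A(\Gamma)$ \cite{JS}.

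For sufficiency I would induct on $|V(\Gamma)|$, all three conditions being hereditary for induced subgraphs. Chordality supplies a simplicial vertex $v$, whose closed neighbourhood $N[v]$ is a clique, yielding the visual amalgam $A(\Gamma)=A(\Gamma\setminus v)*_{A(N(v))}A(N[v])$ with no edges crossing, since $N(v)$ is exactly the set of neighbours of $v$. Condition (ii) forces any two big edges of a clique to lie together in a $3$- or $4$-subclique, hence every clique of $\Gamma$ carries at most one edge labelled $>2$; so each vertex and edge group is $\mathbb{Z}^k$ or $A_m\times\mathbb{Z}^{k-2}$, and these are coherent (up to finite index they are chordal right-angled Artin groups of the form $F\times\mathbb{Z}^{k-1}$). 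Thus both amalgam factors are coherent by the induction hypothesis and by this clique analysis, and the decisive point is to see that the amalgamation preserves coherence.

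The main obstacle is precisely this last step: coherence is not preserved under amalgamation in general, and the simplicial-vertex decomposition can itself manufacture incoherent groups — applied to the $4$-poisonous graph it produces $(A_m\times\mathbb{Z})*_{A_m}(A_m\times\mathbb{Z})\cong A_m\times F_2$. The role of condition (iii) is exactly to forbid this: it ensures that whenever the edge group $A(N(v))$ has a nonabelian free factor (that is, carries its big edge), the two sides cannot each contribute a transverse free direction, so no $F_2\times F_2$ is created along the amalgamation. I would make this quantitative by verifying that the groups arising under conditions (i)--(iii) are locally quasiconvex in an appropriate nonpositively curved model and applying a perimeter-style coherence argument for the resulting graphs of groups, in the spirit of the sources cited for Theorem \ref{teoartincoh}, checking at each inductive step that conditions (i)--(iii) are precisely the hypotheses the criterion demands.
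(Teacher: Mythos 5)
You should first note a structural point: the paper does not prove this statement at all --- Theorem \ref{teoartincoh} is imported verbatim (``rewritten under our notation'') from Gordon \cite{G} and Wise \cite{Wi2}, so there is no internal proof to compare against, and your proposal is in effect an attempt to reconstruct the content of those two papers. The routine parts of your outline are correct and do match the standard reductions: the $4$-poisonous full subgraph of Definition \ref{defpoisgr} generates a parabolic isomorphic to $A_m\times F_2$, which contains $F_2\times F_2$ (e.g.\ by the Tits conjecture \cite{CP}, since $\langle a^2,c^2\rangle\cong F_2$ in $A_m$ for $m>2$); two disjoint big edges in a $4$-clique give $A_m\times A_n\supseteq F_2\times F_2$; condition (ii) does force each clique to carry at most one edge labeled $>2$; and $A_m\times\mathbb{Z}^j$ is indeed coherent, being virtually $F\times\mathbb{Z}^{j+1}$.

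However, all three genuinely hard cores of the theorem are left as gaps. First, for a triangle with two (or three) labels $>2$ you say one ``shows incoherent directly by producing a finitely generated, non-finitely-presented subgroup''; no such subgroup is exhibited, and none is visible by soft means --- for the $(3,3,3)$ triangle, for instance, the Crisp--Paris RAAG is merely $F_3$, so there is no $F_2\times F_2$ to point at. This family is a substantial portion of \cite{G}, and the残 final such case is exactly why \cite{Wi2} is a separate paper; waving at ``the delicate small case'' concedes the theorem rather than proving it. Second, your reduction of the non-chordal case fails as stated: the Tits-conjecture RAAG of an induced cycle with at least one label $>2$ lives on the graph obtained by \emph{deleting} the big edges, i.e.\ on a disjoint union of paths, which is chordal and hence coherent by Droms --- no incoherent RAAG is produced. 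A repair via the generalized Tits conjecture \cite{JS} (where the Garside squares $\Delta_e^2$ effectively subdivide big edges, potentially preserving a long induced cycle) is plausible, but you would have to verify both that the results of \cite{JS} apply to the graphs in question and that the subdivided cycle is an induced subgraph of their RAAG; you do neither. Third, in the sufficiency direction you correctly identify the decisive step --- that the visual amalgam over $A(N(v))$ preserves coherence, with condition (iii) ruling out edge groups that manufacture $A_m\times F_2$ --- but then replace it with an unexecuted program (``locally quasiconvex in an appropriate nonpositively curved model,'' ``perimeter-style coherence argument''). Since amalgamation does not preserve coherence in general, as your own $4$-poisonous example shows, this step \emph{is} the positive half of the theorem, and the proposal contains no argument for it. In sum: the skeleton and the easy cases are right, but what remains unproved is precisely the mathematical content of \cite{G} and \cite{Wi2}.
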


We will split the proof of Theorem \ref{teocoh} into the two next results.

\begin{prop}
Every subgroup separable Artin group is coherent.
\end{prop}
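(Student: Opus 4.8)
The goal is to show every subgroup separable Artin group is coherent. The strategy is to combine the combinatorial characterization of subgroup separability (Theorem \ref{teopois}, via poisonous subgraphs) with the combinatorial characterization of coherence (Theorem \ref{teoartincoh}). So I would prove the contrapositive: if $A(\Gamma)$ is \emph{not} coherent, then $\Gamma$ is poisonous, hence $A(\Gamma)$ is not subgroup separable. By Theorem \ref{teoartincoh}, failure of coherence means at least one of the following holds: (a) $\Gamma$ is not chordal; (b) some complete subgraph of $\Gamma$ on 3 or 4 vertices has at least two edges labeled $>2$; or (c) $\Gamma$ is 4-poisonous. Case (c) immediately gives that $\Gamma$ is poisonous by definition, so the work lies in cases (a) and (b).

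\medskip

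For case (a), suppose $\Gamma$ is not chordal, so $\Gamma$ contains a full closed path $C$ of length $\ell \geq 4$. I would argue that such a cycle forces a poisonous subgraph. If $\ell = 4$, the induced $4$-cycle has no chords; depending on its edge labels it will be $1$-poisonous (a square of edges labeled $2$) if all four edges are labeled $2$, and otherwise some three consecutive vertices along the cycle form a connected full subgraph on three vertices with at most one edge labeled $2$ — wait, I need to check this carefully, since a path of length two with one edge labeled $>2$ and one labeled $2$ is exactly the $3$-poisonous configuration (connected, three vertices, at most one edge labeled $2$). For $\ell \geq 5$, any three consecutive vertices on the induced cycle span a full path of length two (no chords since the cycle is induced), which is a connected full subgraph on three vertices with at most one labeled-$2$ edge, hence $3$-poisonous regardless of labels. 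So the main subtlety here is handling the label bookkeeping on the $4$-cycle; the longer cycles are easy because induced paths of length $2$ are automatically $3$-poisonous unless both edges are labeled $2$, and even then a suitable sub-path along a long enough cycle will be a non-closed path of length three with all edges labeled $2$, i.e.\ $2$-poisonous, or a three-vertex connected piece if some label exceeds $2$.

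\medskip

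For case (b), suppose a complete subgraph $K$ on three or four vertices has two edges labeled $>2$. I would locate a small full subgraph witnessing poisonousness. If $K$ has three vertices and two of its three edges are labeled $>2$, then $K$ is a connected full subgraph on three vertices with at most one edge labeled $2$ — this is precisely $3$-poisonous. If $K$ has four vertices with at least two edges labeled $>2$, I would pick a pair of vertices joined by a labeled-$>2$ edge and, together with a third vertex, extract a three-vertex connected full subgraph with at most one $2$-labeled edge, again landing in condition $3$ of Definition \ref{defpoisgr}; the only care needed is to ensure the chosen three vertices really do carry two non-$2$ labels, which follows by a short case analysis on how the two $>2$ edges sit inside $K$ (sharing a vertex or being disjoint).

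\medskip

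The main obstacle I anticipate is the careful label bookkeeping in the chordality case, specifically the induced $4$-cycle with mixed labels: one must verify that whatever the labels are, three appropriately chosen vertices form a $3$-poisonous triangle-path or the whole square is $1$-poisonous, and that a $4$-cycle with exactly the wrong labels doesn't slip through the cracks of the five poisonous types. Everything else reduces to matching a forbidden configuration from Theorem \ref{teoartincoh} against the list in Definition \ref{defpoisgr}, after which Theorem \ref{teopois} finishes the argument.
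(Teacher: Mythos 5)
Your overall strategy (contrapositive: not coherent $\Rightarrow$ poisonous $\Rightarrow$ not subgroup separable, matching the forbidden configurations of Theorem \ref{teoartincoh} against Definition \ref{defpoisgr}) is logically the same as the paper's direct argument, and your treatment of the chordality case (a) is correct --- that is actually the easy case, despite your worry. The genuine gap is in case (b), for a complete subgraph $K$ on four vertices with two edges labeled $>2$. You claim that a ``short case analysis'' on whether the two $>2$ edges share a vertex or are disjoint always yields a three-vertex full subgraph with at most one edge labeled $2$, i.e.\ a $3$-poisonous configuration. This is false in the disjoint case: take $K$ on vertices $a,b,c,d$ with $ab$ and $cd$ labeled $>2$ and the remaining four edges $ac, ad, bc, bd$ all labeled $2$. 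Then \emph{every} triangle in $K$ contains exactly one $>2$ edge and two edges labeled $2$, so no three-vertex subgraph is $3$-poisonous, and your proposed step fails. This configuration is precisely the $5$-poisonous graph of Definition \ref{defpoisgr} (a square of $2$-labeled edges with both diagonals labeled $>2$) --- indeed, this case is presumably why condition 5 exists in the definition at all, and your write-up never invokes it.

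The repair is what the paper does: in the four-vertex case, first note the two $>2$ edges cannot share a vertex (that triangle would be $3$-poisonous); then note every \emph{other} edge of $K$ must be labeled $2$ (otherwise some pair of $>2$ edges would share a vertex, again giving a $3$-poisonous triangle); and conclude that $K$ itself is then exactly the $5$-poisonous graph. With that correction your argument goes through and coincides with the paper's proof.
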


\begin{proof}
  Let $A=A(\Gamma)$ be a subgroup separable Artin group.  By Theorem \ref{teopois}, $\Gamma$ is not poisonous. Let $\gamma$ be a full subgraph of $\Gamma$.

  First we prove $\Gamma$ is chordal. Suppose $\gamma$ is a closed path of length $l$ greater or equal to 4. If all edges of $\gamma$ are labeled by 2 then either $\Gamma$ is 1-poisonous (if $l=4$) or 2-poisonous (if $l>4$), which is a contradiction. Then $\gamma$ contains an edge labeled by $m>2$, generated by vertices $u,v$. Since $\gamma$ is connected, then $v$ is adjacent to another vertex $w$ of $\gamma$. Since $l\geq 4$, then $S(u,v,w)$ is 3-poisonous and so is $\Gamma$, which is a contradiction. Then there can be no such $\gamma$ hence $\Gamma$ is chordal.

Now suppose $\gamma$ is complete.

If $\gamma$ has three vertices, since $\Gamma$ is not 3-poisonous then $\gamma$ has at most one edge label $>2$.

If $\gamma$ has four vertices, suppose it contains two edges labeled by $m>2$ and $n>2$. Those edges may not have a vertex in common, otherwise $\Gamma$ would be 3-poisonous. For the same reason, all other edges of $\gamma$ need to be labeled by 2. That means $\Gamma$ is 5-poisonous, which is a contradiction.

Since $\Gamma$ is obviously not 4-poisonous, then $A$ is coherent by Theorem \ref{teoartincoh}.
\end{proof}

\begin{prop}
Let $A=A(\Gamma)$ be a coherent Artin group. Then $A$ is subgroup separable if and only if every non-closed full path of $\Gamma$ which has length greater than 1  has length 2 with both edges labeled by 2.
\end{prop}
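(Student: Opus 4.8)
The plan is to combine the two characterizations already in hand. Theorem \ref{teopois} gives, with no coherence assumption needed, that $A=A(\Gamma)$ is subgroup separable exactly when $\Gamma$ is not poisonous, while Theorem \ref{teoartincoh} records the structural consequences of coherence of $A$. So I would restate the claim as: \emph{assuming $\Gamma$ satisfies the coherence conditions of Theorem \ref{teoartincoh}, $\Gamma$ is not poisonous if and only if every non-closed full path of length greater than $1$ has length $2$ with both edges labeled by $2$.} The first move is to observe that coherence already eliminates three of the five poisonous types: condition (1) (a square with all edges labeled $2$) is a full closed path of length $4$ and is excluded by chordality; condition (4) is literally the $4$-poisonous configuration, excluded by hypothesis; and condition (5) is $K_4$ carrying two edges labeled $>2$, excluded because every complete subgraph on four vertices has at most one edge labeled $>2$. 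Hence, under coherence, \emph{$\Gamma$ is poisonous if and only if it is $2$-poisonous or $3$-poisonous}, and the whole problem reduces to matching these two surviving types against the path condition.

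For the implication that a subgroup separable $A$ forces the path condition, I would argue contrapositively: if the path condition fails there is a non-closed full path $\pi=v_0v_1\cdots v_l$ with $l\geq 2$ that is not ``length $2$ with both edges labeled $2$''. If $l=2$, then at least one edge is labeled $>2$, so $\pi$ is a connected full subgraph on three vertices with at most one edge labeled $2$, i.e. $\Gamma$ is $3$-poisonous. If $l\geq 3$ and some edge $v_jv_{j+1}$ is labeled $>2$, I adjoin a third consecutive vertex (which exists since $l\geq 2$) to get a full cherry $S(v_{j-1},v_j,v_{j+1})$ or $S(v_j,v_{j+1},v_{j+2})$ carrying an edge labeled $>2$, so again $\Gamma$ is $3$-poisonous. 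Finally, if $l\geq 3$ and all edges are labeled $2$, then $S(v_0,v_1,v_2,v_3)$, being an induced subgraph of the full path $\pi$, is a non-closed path of length $3$ with all edges labeled $2$, so $\Gamma$ is $2$-poisonous. In every case $\Gamma$ is poisonous, so $A$ is not LERF by Theorem \ref{teopois}.

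For the reverse implication — the path condition forcing subgroup separability — I again argue contrapositively, assuming $\Gamma$ is poisonous, hence $2$- or $3$-poisonous by the reduction above. If it is $2$-poisonous it contains a full non-closed path of length $3$, which already violates the path condition. If it is $3$-poisonous, let $\gamma$ be the offending connected full subgraph on three vertices with at most one edge labeled $2$. This is the one place where coherence does genuine work: $\gamma$ cannot be a triangle, since a triangle with at most one edge labeled $2$ is a complete subgraph on three vertices with at least two edges labeled $>2$, contradicting Theorem \ref{teoartincoh}. Therefore $\gamma$ must be a path on three vertices carrying an edge labeled $>2$, i.e. a full path of length $2$ not having both edges labeled $2$, violating the path condition. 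I expect this triangle-exclusion step to be the main (if mild) obstacle: it is the only point where the three-vertex completeness part of Theorem \ref{teoartincoh} is really invoked rather than used for bookkeeping, and it is precisely what collapses type-$3$ poison to something ``path-shaped'' and makes the clean reformulation work. Combining the two implications gives the stated equivalence.
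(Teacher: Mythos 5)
Your proposal is correct and follows essentially the same route as the paper: both reduce the statement to graph conditions via Theorem \ref{teopois}, use Theorem \ref{teoartincoh} to rule out the $1$-, $4$- and $5$-poisonous configurations, and then match the remaining $2$- and $3$-poisonous cases against the path condition, including the key observation that coherence forbids the triangle case of type-$3$ poison. Your contrapositive packaging and the explicit ``poisonous iff $2$- or $3$-poisonous under coherence'' reduction are only presentational differences from the paper's direct argument.
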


\begin{proof}
  Let $A=A(\Gamma)$ be a coherent Artin group.

   First suppose every non-closed full path of $\Gamma$ with length greater than 1  has length 2 with both edges labeled by 2. By Theorem \ref{teopois} it is enough to prove $\Gamma$ is not poisonous. The condition above guarantees that $\Gamma$ is not 2-poisonous. By Theorem \ref{teoartincoh}, $\Gamma$ is chordal hence not 1-poisonous. The last two conditions of the same theorem also tell us $\Gamma$ is not 4-poisonous nor 5-poisonous.

Suppose $\Gamma$ is 3-poisonous. Then there is a full subgraph $\gamma$ of $\Gamma$ which has exactly three vertices and at most one edge labeled by 2. If $\gamma$ is complete, that contradicts Theorem \ref{teoartincoh}; if it is not complete then that contradicts our hypothesis. Hence $\Gamma$ is not poisonous.

  Now suppose $A$ is subgroup separable. By Theorem \ref{teopois} $\Gamma$ is not poisonous. Let $\gamma$ be a non-closed full path of $\Gamma$ of length greater than 1. First suppose $\gamma$ contains an edge labeled by $m>2$ generated by vertices $u,v$. Since $\gamma$ is not an edge then there is a third vertex $w$ which is adjacent to $u$ or $v$. That means $S(u,v,w)$ is 3-poisonous and so is $\Gamma$, which is a contradiction. Then $\gamma$ has all edges labeled by 2. That implies the length of $\gamma$ is equal to 2, otherwise $\Gamma$ would be 2-poisonous, which concludes the proof.
\end{proof}

{\bfseries Acknowledgments}: We thank A. Krasilnikov and P. Zalesskii for the insightful questions that inspired this work. The first author was partially supported by FINAPESQ-UEFS, FAPDF, Brazil. The second author was partially supported by DPI/UnB, FAPDF, Brazil.

\end{document}